\documentclass[11pt]{amsart}

\usepackage[T1]{fontenc}
\usepackage{lmodern}
\usepackage{microtype}
\usepackage{amsmath,amssymb,mathtools}
\usepackage{aliascnt}
\usepackage{hyperref}
\usepackage[nameinlink,noabbrev]{cleveref}

\hypersetup{
  colorlinks=true,
  linkcolor=blue,
  citecolor=blue,
  urlcolor=blue,
  pdftitle={An Explicit Characteristic-2 Counterexample to the Separable Jacobian Conjecture}
}

\newtheorem{theorem}{Theorem}[section]

\newaliascnt{proposition}{theorem}
\newtheorem{proposition}[proposition]{Proposition}
\aliascntresetthe{proposition}

\newaliascnt{lemma}{theorem}
\newtheorem{lemma}[lemma]{Lemma}
\aliascntresetthe{lemma}

\newaliascnt{corollary}{theorem}
\newtheorem{corollary}[corollary]{Corollary}
\aliascntresetthe{corollary}

\newaliascnt{conjecture}{theorem}
\newtheorem{conjecture}[conjecture]{Conjecture}
\aliascntresetthe{conjecture}

\theoremstyle{remark}
\newaliascnt{remark}{theorem}
\newtheorem{remark}[remark]{Remark}
\aliascntresetthe{remark}

\crefname{theorem}{theorem}{theorems}
\Crefname{theorem}{Theorem}{Theorems}
\crefname{proposition}{proposition}{propositions}
\Crefname{proposition}{Proposition}{Propositions}
\crefname{lemma}{lemma}{lemmas}
\Crefname{lemma}{Lemma}{Lemmas}
\crefname{corollary}{corollary}{corollaries}
\Crefname{corollary}{Corollary}{Corollaries}
\crefname{conjecture}{conjecture}{conjectures}
\Crefname{conjecture}{Conjecture}{Conjectures}
\crefname{remark}{remark}{remarks}
\Crefname{remark}{Remark}{Remarks}

\newcommand{\A}{\mathbb{A}}
\newcommand{\Ftwo}{\mathbb{F}_{2}}

\newcommand{\Jac}{\operatorname{Jac}}
\newcommand{\Res}{\operatorname{Res}}
\newcommand{\trdeg}{\operatorname{trdeg}}

\title{An Explicit Characteristic-$2$ Counterexample to the Separable Jacobian Conjecture}

\author{Irit Huq-Kuruvilla}
\date{\today}

\subjclass[2020]{Primary 14R15; Secondary 12F05, 13N15}
\keywords{Jacobian conjecture, positive characteristic, polynomial automorphism,
separable field extension, explicit counterexample}

\begin{document}

\begin{abstract}
Let $k$ be a field of characteristic $2$.  We exhibit an explicit polynomial
endomorphism $F\colon \A_k^3\to\A_k^3$ whose Jacobian determinant is identically
$1$, whose induced extension of rational function fields has degree $3$, and
which is nevertheless noninjective.  Since $2\nmid 3$, this gives a
counterexample to the usual Adjamagbo, or separable, formulation of the
Jacobian conjecture in characteristic $2$.  Stabilization yields analogous
counterexamples in every dimension $n\geq 3$.
\end{abstract}

\maketitle

\section{Introduction}

Let $k$ be a field of characteristic $p>0$, let
$X=(x_1,\dots,x_n)$, and let
$F=(F_1,\dots,F_n)\in k[X]^n$.  Write
\[
  k(F)=k(F_1,\dots,F_n)\subseteq k(X)=k(x_1,\dots,x_n).
\]
A commonly used positive-characteristic refinement of the Jacobian
conjecture, originating with Adjamagbo, is the following; see
\cite{Adjamagbo1995,KimuraOkuda2005,MaubachRauf2017}.

\begin{conjecture}[Separable Jacobian conjecture]\label{conj:sjc}
Assume that
\[
  \det \Jac(F)\in k^\times
  \qquad\text{and}\qquad
  p\nmid [k(X):k(F)].
\]
Then $F$ is a polynomial automorphism of $\A_k^n$.
\end{conjecture}

The original Jacobian conjecture was shown to be false in dimension three by Anthropic \cite{Anthro}. The purpose of this note is to record the following explicit counterexample to the characteristic 2 case, the example given by Anthropic does not specialize, but a modification is sufficient to provide a counterexample for this case. As far as the author is aware, no other counterexample to this conjecture is known. 

\begin{theorem}\label{thm:main}
Let $k$ be any field of characteristic $2$, and define
$F=(P,Q,R)\colon\A_k^3\to\A_k^3$ by
\begin{equation}\label{eq:map}
  F(x,y,z)=
  \bigl(x+x^2y,\;y+xz+x^2yz,\;z+x^2z^2\bigr).
\end{equation}
Then
\[
  \det \Jac(F)=1,
  \qquad
  [k(x,y,z):k(P,Q,R)]=3,
\]
and $F$ is not injective.  Consequently, \cref{conj:sjc} is false in
characteristic $2$ and dimension $3$.
\end{theorem}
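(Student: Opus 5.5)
The argument has three parts: the Jacobian computation, an explicit collision of two points over the prime field $\Ftwo\subseteq k$, and an elimination that exhibits $k(x,y,z)$ as a simple cubic extension of $k(P,Q,R)$.

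\emph{Jacobian.} I differentiate directly, using $2=0$. We have $P_x=1$, $P_y=x^2$, $P_z=0$. Next, $Q_x=z$, $Q_y=1+x^2z$, $Q_z=x+x^2y$. Finally $R_x=R_y=0$ and $R_z=1$. Expanding along the third row gives
\[
\det\Jac(F)=P_xQ_y-P_yQ_x=1+x^2z-x^2z=1 .
\]

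\emph{Non-injectivity.} I look for two points in the fibre over a point with $P=0$, so that $x=0$ is allowed. For any $Q$ we have $F(0,Q,R)=(0,Q,R)$. A second preimage of $(0,1,0)$ should satisfy $1+xy=0$, and the elimination below suggests $x=1/Q=1$, $y=1$, $z=0$. Indeed
\[
F(1,1,0)=(1+1,\;1+0+0,\;0)=(0,1,0)=F(0,1,0).
\]
So $F$ is not injective, already on $\Ftwo$-points.

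\emph{Degree.} The key identity is $Q=y+xz(1+xy)=y+zP$. From $P=x+x^2y$ we get $y=(P+x)/x^2$. Hence
\[
z=\frac{Qx^2+P+x}{Px^2}.
\]
Therefore $k(x,y,z)=L(x)$, where $L=k(P,Q,R)$. Substituting $z$ into $R=z+x^2z^2$ and clearing $P^2x^2$, the terms $P^2$ cancel in characteristic $2$. After dividing by $x\neq 0$, $x$ satisfies
\[
Q^2x^3+Ax+P=0,\qquad A=1+PQ+RP^2 .
\]
This shows $[k(X):L]\le 3$.

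For equality I need two facts.
\begin{itemize}
\item $P,Q,R$ are algebraically independent. This follows from the degree bound itself: $k(X)$ is algebraic over $L$, so $\trdeg_k L=3$.
\item The cubic is irreducible over $L$. The map $R\mapsto A$ is an invertible change of variables over $k(P,Q)$ (as $P\neq0$), so $L=k(P,Q,A)$ with $P,Q,A$ independent. The cubic is therefore irreducible over $L$ if $Q^2x^3+Ax+P$ is irreducible in $k(P,Q)[A,x]$, by Gauss's lemma. It is linear in $A$, with coefficients $x$ and $Q^2x^3+P$, which are coprime in $k(P,Q)[x]$, so it is irreducible.
\end{itemize}
Thus $[k(X):L]=3$, which is odd, and the theorem follows.

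I expect the irreducibility bookkeeping to be the main obstacle. The Gauss-lemma step needs care, because it treats $A$ rather than $R$ as the free variable; the invertible substitution $R\mapsto A$ is what justifies this. As a cross-check, a rational root of the cubic in $L$ would make $x\in L$, so $k(X)=L$ and $F$ would be birational. That contradicts the two distinct preimages of the generic-looking fibre structure above.
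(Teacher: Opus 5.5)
Your proof is correct, and its skeleton matches the paper's. Both find an explicit primitive element of $k(x,y,z)$ over $K=k(P,Q,R)$ satisfying a cubic. Both use the transcendence-degree argument for algebraic independence. Both get irreducibility by making the cubic linear in a free coordinate and applying Gauss's lemma.

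The difference lies in the primitive element and how its cubic is found. The paper passes to $U=P$, $V=Q+PR$, $W=R$ and engineers the factorization $D(T)=(xT+b)(cT^2+dT+z)$ with unit resultant. It takes the root $t=(1+x^2z)/x$ and then needs the reconstruction $x=1/D'(t)$, $z=a(t+a)$. You take $x$ itself, recover $y,z$ immediately from $P=x+x^2y$ and $Q=y+zP$, and obtain $Q^2x^3+Ax+P=0$ by direct elimination. The two cubics agree up to an affine change of variable over $K$: one has $t=(1+Qx)/P$ and
\[
P^2\,D\!\left(\frac{1+QT}{P}\right)=Q\,\bigl(Q^2T^3+AT+P\bigr).
\]

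What each route buys:
\begin{itemize}
\item The paper's normalization gives a cubic whose coefficients are exactly the coordinates $U,V,W$, so irreducibility is immediate from linearity in $W$ with coefficient $1$. It also gives separability for free from $D'(t)=1/x$, and explains where the map comes from.
\item Your route is more elementary and needs no foresight. The triangular substitution $R\mapsto A$ plays the role of the paper's $Q\mapsto V$.
\end{itemize}

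One caveat: your closing cross-check is only heuristic. A root of the cubic in $L$ need not be $x$ itself. Reducibility could leave $x$ in a quadratic factor, which is precisely the degree-$2$ case that must be excluded. Also, a collision on the special fibre $P=0$ rules out birationality only via \'etaleness and Zariski's Main Theorem. Nothing in your proof depends on this remark, since your Gauss-lemma argument already settles irreducibility.
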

\section{AI Usage}
This text is the result of a discussion with ChatGPT 5.6 Sol. Conversation transcripts are available by request. The proof itself has been verified as correct by the named author.

\section{Invertibility}

\begin{proposition}\label{prop:jac-collision}
The map in \eqref{eq:map} satisfies $\det\Jac(F)=1$ and is not injective over the prime field $\Ftwo$.
\end{proposition}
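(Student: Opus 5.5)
The plan is to compute the Jacobian matrix directly, using that in characteristic $2$ every term $x^2$ has zero partial derivative with respect to $x$; this will make the matrix block triangular. I will then find a collision by evaluating $F$ on the eight points of $\Ftwo^3$.

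\textbf{Jacobian.} Write $P=x+x^2y$, $Q=y+xz+x^2yz$, $R=z+x^2z^2$. In characteristic $2$ we have $\partial_x(x^2)=2x=0$ and $\partial_z(z^2)=0$, which gives the partials below.
\begin{itemize}
\item For $P$: $P_x=1$, $P_y=x^2$, $P_z=0$.
\item For $Q$: $Q_x=z$ (the term $x^2yz$ contributes $2xyz=0$), $Q_y=1+x^2z$, $Q_z=x+x^2y$.
\item For $R$: $R_x=0$, $R_y=0$, $R_z=1+2x^2z=1$.
\end{itemize}
The last row is therefore $(0,0,1)$. Expanding along it gives
\[
\det\Jac(F)=P_xQ_y-P_yQ_x=(1+x^2z)-x^2z=1 .
\]
The only point to watch is that these cancellations genuinely use characteristic $2$. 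Over $\mathbb{Q}$ the same formulas would not give a constant, and that is consistent with the remark that the example has been modified for this case.

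\textbf{Noninjectivity.} I will evaluate $F$ on the points of $\Ftwo^3$. We have $F(0,1,0)=(0,1,0)$. For $(x,y,z)=(1,1,0)$ we get $P=1+1=0$, $Q=1+0+0=1$ and $R=0$, so $F(1,1,0)=(0,1,0)$ as well.

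Hence $F(0,1,0)=F(1,1,0)$ with $(0,1,0)\neq(1,1,0)$, so $F$ is not injective on $\Ftwo$-points. It follows that $F$ is not injective over any field $k$ of characteristic $2$.

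There is no real obstacle here; the only care needed is the characteristic-$2$ differentiation in the Jacobian. The harder part of the main theorem, the degree $[k(x,y,z):k(P,Q,R)]=3$, is not part of this proposition.
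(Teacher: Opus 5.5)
Your proposal is correct and follows essentially the same route as the paper. You use the same characteristic-$2$ Jacobian computation giving $\det\Jac(F)=(1+x^2z)-x^2z=1$, and the same collision $F(0,1,0)=F(1,1,0)=(0,1,0)$; the paper also lists $F(1,1,1)=(0,1,0)$, but that third point is not needed for noninjectivity.
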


\begin{proof}
In characteristic $2$, derivatives of squares vanish.  Hence
\[
\Jac(F)=
\begin{pmatrix}
1 & x^2 & 0\\
z & 1+x^2z & x+x^2y\\
0 & 0 & 1
\end{pmatrix},
\]
and therefore
\[
  \det\Jac(F)=(1+x^2z)-x^2z=1.
\]
Direct substitution gives
\[
  F(0,1,0)=F(1,1,0)=F(1,1,1)=(0,1,0).
\]
The three source points are distinct, so $F$ is not injective and hence is not
an automorphism.
\end{proof}

\section{The cubic function-field description}

Set
\[
  K=k(P,Q,R),
  \qquad
  L=k(x,y,z).
\]
Make the triangular change of target coordinates
\begin{equation}\label{eq:UVW}
  U=P,
  \qquad
  V=Q+PR,
  \qquad
  W=R.
\end{equation}
Since $Q=V+UW$ in characteristic $2$, one has
$K=k(U,V,W)$.

Consider the cubic
\begin{equation}\label{eq:D}
  D(T)=UT^3+T^2+VT+W\in K[T].
\end{equation}
The key point is that this cubic has a root that generates all of $L$ over
$K$.

\subsection{A linear--quadratic factorization}

Define
\begin{equation}\label{eq:bcd}
  b=1+x^2z,
  \qquad
  c=1+xy,
  \qquad
  d=Q.
\end{equation}
Then
\begin{equation}\label{eq:basic-identities}
  P=xc,
  \qquad
  R=zb,
  \qquad
  xd+bc=1.
\end{equation}
Moreover,
\begin{equation}\label{eq:Videntity}
  xz+bd=d+xczb=Q+PR=V.
\end{equation}
Indeed, the first equality in \eqref{eq:Videntity} follows from
$1+xd=bc$ by multiplying by $xz$.

It follows that
\begin{equation}\label{eq:factorization}
\boxed{
D(T)=(xT+b)(cT^2+dT+z).
}
\end{equation}
For later use, the two factors have resultant
\begin{align}
\Res_T(xT+b,cT^2+dT+z)
  &=x^2z+xbd+b^2c \notag\\
  &=x^2z+b(xd+bc) \notag\\
  &=x^2z+b=1.
\label{eq:resultant}
\end{align}

\subsection{A primitive element and rational reconstruction}

Put
\begin{equation}\label{eq:t}
  t=\frac{b}{x}=\frac{1+x^2z}{x}\in L.
\end{equation}
Because the characteristic is $2$, the linear factor in
\eqref{eq:factorization} vanishes at $T=t$, and hence $D(t)=0$.

\begin{lemma}\label{lem:reconstruct}
One has $L=K(t)$.  More explicitly, if
\[
  a=D'(t)=Ut^2+V,
\]
then
\begin{equation}\label{eq:reconstruction}
  x=a^{-1},
  \qquad
  z=a(t+a),
  \qquad
  y=Q+Pz.
\end{equation}
\end{lemma}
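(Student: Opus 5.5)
The plan is to derive everything from the factorization \eqref{eq:factorization} and the resultant identity \eqref{eq:resultant}. Once the three formulas in \eqref{eq:reconstruction} are established, the equality $L=K(t)$ follows at once. Indeed, $t=b/x\in L$ and $K\subseteq L$, so $K(t)\subseteq L$. Conversely, \eqref{eq:reconstruction} writes $x$, $z$ and $y$ in turn as rational expressions in $t$ and elements of $K$ (note that $P,Q,U,V\in K$). Hence $L\subseteq K(t)$.

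\emph{Step 1: $a=x^{-1}$.} In characteristic $2$ one has $D'(T)=3UT^2+2T+V=UT^2+V$, which confirms that $a=Ut^2+V$. Write $\ell(T)=xT+b$ and $q(T)=cT^2+dT+z$, so that $D=\ell q$. By the product rule,
\[
D'(t)=\ell'(t)q(t)+\ell(t)q'(t)=x\,q(t),
\]
since $\ell(t)=0$. For a linear polynomial $\ell=xT+b$ with root $t$, the resultant is $\Res_T(\ell,q)=x^{2}q(t)$ up to sign, and signs are irrelevant in characteristic $2$. By \eqref{eq:resultant} this gives $x^2q(t)=1$. Therefore $a=x\cdot x^{-2}=x^{-1}$. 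As a sanity check, one can also compute $q(t)$ directly: multiplying $q(t)=cb^2/x^2+db/x+z$ by $x^2$ gives $b^2c+xbd+x^2z$, which is exactly the expression evaluated in \eqref{eq:resultant}.

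\emph{Step 2: $z=a(t+a)$.} From \eqref{eq:t} we have $t=(1+x^2z)/x=x^{-1}+xz=a+xz$. Hence $xz=t+a$, and multiplying by $a=x^{-1}$ gives $z=a(t+a)$.

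\emph{Step 3: $y=Q+Pz$.} This is a direct expansion in characteristic $2$:
\[
Q+Pz=y+xz+x^2yz+xz+x^2yz=y.
\]

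The only step with real content is Step~1, namely identifying $D'(t)$ with $x^{-1}$. The route through the resultant avoids expanding $Ut^2+V$ in $x,y,z$. If the resultant convention were in doubt, the fallback is the direct check $x^2q(t)=1$ given at the end of Step~1, which uses only \eqref{eq:basic-identities}.
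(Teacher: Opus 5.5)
Your proposal is correct and follows essentially the same route as the paper. You use the product rule to get $D'(t)=x\,q(t)$ from $\ell(t)=0$, then the identity $\Res_T(\ell,q)=x^2q(t)=1$ to get $a=x^{-1}$, and the relations $t=a+xz$ and $Q=y+Pz$ to recover $z$ and $y$, with $L=K(t)$ following at once. Your extra direct check that $x^2q(t)=b^2c+xbd+x^2z$ just confirms the resultant convention that the paper takes for granted.
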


\begin{proof}
Write
\[
  \ell(T)=xT+b,
  \qquad
  h(T)=cT^2+dT+z,
\]
so that $D=\ell h$.  Since $\ell(t)=0$,
\[
  D'(t)=\ell'(t)h(t)=x h(t).
\]
For a linear polynomial and a quadratic polynomial,
\[
  \Res_T(\ell,h)=x^2h(t).
\]
Together with \eqref{eq:resultant}, this gives
\[
  D'(t)=x h(t)=\frac{1}{x}.
\]
Thus $a=1/x$ and the first formula in \eqref{eq:reconstruction} follows.
Next,
\[
  t=\frac{1+x^2z}{x}=\frac1x+xz=a+xz,
\]
so $z=a(t+a)$.  Finally,
\[
  Q=y+xz+x^2yz=y+(x+x^2y)z=y+Pz,
\]
whence $y=Q+Pz$.  Therefore $x,y,z\in K(t)$, and the reverse inclusion is
obvious.
\end{proof}

Since $t$ is algebraic over $K$ by \eqref{eq:D}, \cref{lem:reconstruct}
shows that $L/K$ is finite of degree at most $3$.  Consequently
\[
  \trdeg_k K=\trdeg_k L=3.
\]
As $K=k(U,V,W)$ is generated by three elements, $U,V,W$ are algebraically
independent over $k$.  In particular,
\[
  K\cong k(U,V,W)
\]
is a purely transcendental field.

\subsection{Irreducibility of the cubic}

\begin{lemma}\label{lem:irreducible}
The polynomial $D(T)=UT^3+T^2+VT+W$ is irreducible in $K[T]$.
\end{lemma}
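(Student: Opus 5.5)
Since $K=k(U,V,W)$ is purely transcendental with $U,V,W$ algebraically independent (as established just before the statement), $D(T)=UT^3+T^2+VT+W$ can be viewed as a polynomial in $k[U,V,W][T]$. The plan is to apply Gauss's lemma: $k[U,V,W]$ is a UFD with fraction field $K$, so it suffices to show that $D$ is primitive and irreducible in $k[U,V,W,T]$. Primitivity is immediate, because the coefficient of $T^2$ is $1$.

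For irreducibility in the polynomial ring, the key observation is that $D$ has degree $1$ in the variable $W$: $D=W+(UT^3+T^2+VT)$. A polynomial that is linear in one variable, $W\cdot 1+g$ with $g\in k[U,V,T]$, is irreducible in $k[U,V,T][W]$ whenever the coefficients $1$ and $g$ are coprime, which is trivially the case here. To see this directly, suppose $D=AB$ in $k[U,V,W,T]$. Comparing $W$-degrees, one factor, say $B$, has $W$-degree $0$. Then $B$ divides both coefficients of $D$ as a polynomial in $W$, so $B$ divides $1$ and is therefore a unit. Hence $D$ is irreducible in $k[U,V,W,T]$, and since it is primitive in $T$, Gauss's lemma gives irreducibility in $K[T]$.

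An alternative route, which I would keep as a check, is to use that a cubic over a field is reducible iff it has a root. A root in $K$ can be written $T=f/g$ with $f,g\in k[U,V,W]$ coprime. Clearing denominators gives $Uf^3+f^2g+Vfg^2+Wg^3=0$, and a degree and divisibility argument in $W$ rules this out. The Gauss-lemma argument is cleaner, however. The main step that needs care is using algebraic independence so that $K[T]$ really is $\operatorname{Frac}(k[U,V,W])[T]$ with $U,V,W$ free variables; this is supplied by the transcendence-degree remark preceding the lemma. The conclusion $[L:K]=3$ then follows from \cref{lem:reconstruct}.
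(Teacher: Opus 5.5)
Your proof is correct and is essentially the paper's argument. You show $D$ is irreducible in $k[U,V,W,T]$ because it is linear in $W$ with coefficient $1$, then apply Gauss's lemma over the UFD $k[U,V,W]$, using that the $T^2$-coefficient $1$ makes $D$ primitive and that the preceding transcendence-degree remark gives algebraic independence of $U,V,W$.
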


\begin{proof}
First view $D$ as a polynomial in $W$ over the domain $k[U,V,T]$:
\[
  D=W+(UT^3+T^2+VT).
\]
It is monic of degree one in $W$.  In any factorization in
$k[U,V,T][W]$, one factor must have $W$-degree zero; comparison of the
coefficient of $W$ then forces that factor to be a unit.  Thus $D$ is
irreducible in $k[U,V,W,T]$.

As a polynomial in $T$ over the UFD $k[U,V,W]$, the coefficients of $D$ are
primitive because the coefficient of $T^2$ is $1$.  Gauss's lemma therefore
implies that $D$ is irreducible over the fraction field
$k(U,V,W)=K$.
\end{proof}

\begin{proposition}\label{prop:degree}
The extension $L/K$ has degree $3$ and is separable.
\end{proposition}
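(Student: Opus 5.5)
By \cref{lem:reconstruct}, $L=K(t)$, and by \eqref{eq:D} $t$ is a root of $D(T)=UT^3+T^2+VT+W\in K[T]$. By \cref{lem:irreducible}, $D$ is irreducible over $K$. Since $U\neq 0$ in $K$, it has degree exactly $3$. Hence $D/U$ is the minimal polynomial of $t$ over $K$, and
\[
  [L:K]=[K(t):K]=\deg D=3.
\]
The only input needed here is that $U,V,W$ are algebraically independent, which the paper has already deduced from the transcendence-degree count. This is what makes $K$ the rational function field in which \cref{lem:irreducible} applies, and it also guarantees $U\neq 0$.

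For separability I will show that $t$ is a separable element, i.e.\ that its minimal polynomial has no repeated root. Since $D$ is irreducible, it suffices to check $D'(t)\neq 0$. In characteristic $2$,
\[
  D'(T)=3UT^2+2T+V=UT^2+V,
\]
so $D'(t)=a$. The proof of \cref{lem:reconstruct} computed $a=1/x$, which is a nonzero element of $L$. Alternatively, one can note that $D'=UT^2+V$ is not the zero polynomial, and an irreducible polynomial with nonzero derivative is separable. Either way, $t$ is separable over $K$. An extension generated by a single separable element is separable, so $L=K(t)$ is separable over $K$.

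There is no serious obstacle. The one point to state carefully is why separability follows: an irreducible polynomial $f$ over a field is separable if and only if $f'\neq 0$, since otherwise $\gcd(f,f')$ would be a nontrivial factor. In characteristic $2$ this can fail only when $f\in K[T^2]$, and $D$ is not of that form because of the terms $UT^3$ and $VT$. Finally, $2\nmid 3$, which is the hypothesis needed in \cref{conj:sjc}.
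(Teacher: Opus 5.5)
Your proof is correct and follows the paper's argument: \cref{lem:reconstruct} gives $L=K(t)$, \cref{lem:irreducible} makes $D$ (up to the scalar $U$) the minimal polynomial of $t$, so $[L:K]=3$, and $D'(t)=a=1/x\neq 0$ gives separability. Your alternative ending also works: $D'=UT^2+V$ is a nonzero polynomial, so the irreducible $D$ is separable.
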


\begin{proof}
By \cref{lem:reconstruct}, $L=K(t)$, and $t$ is a root of $D$.  By
\cref{lem:irreducible}, $D$ is the minimal polynomial of $t$ over $K$ up to
multiplication by a nonzero scalar.  Hence
\[
  [L:K]=3.
\]
Furthermore, \cref{lem:reconstruct} gives
\[
  D'(t)=\frac1x\neq 0,
\]
so the extension is separable.
\end{proof}

\section{Consequences}

\begin{proof}[Proof of \cref{thm:main}]
The Jacobian and noninjectivity assertions are
\cref{prop:jac-collision}, while the function-field assertion is
\cref{prop:degree}.  Since $2\nmid 3$, all hypotheses of
\cref{conj:sjc} hold, but the conclusion fails.
\end{proof}

\begin{corollary}\label{cor:stabilization}
For every $n\geq 3$, the map
\[
  F_n=(P,Q,R,x_4,\dots,x_n)\colon\A_k^n\longrightarrow\A_k^n
\]
is a counterexample to the separable Jacobian conjecture in characteristic
$2$.
\end{corollary}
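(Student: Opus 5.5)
We have to check three things for $F_n$: the Jacobian is a unit, the function-field degree is coprime to $2$, and $F_n$ is not injective. Write $F_n=F\times\mathrm{id}_{\A^{n-3}}$ with coordinates $x_1=x$, $x_2=y$, $x_3=z$, $x_4,\dots,x_n$.

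For the Jacobian, $P,Q,R$ do not involve $x_4,\dots,x_n$, and the last $n-3$ components are the coordinates themselves. So $\Jac(F_n)$ is block diagonal, with blocks $\Jac(F)$ and the identity $I_{n-3}$. By \cref{prop:jac-collision} this gives
\[
\det\Jac(F_n)=\det\Jac(F)\cdot 1=1 .
\]

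For non-injectivity, take the collision $F(0,1,0)=F(1,1,0)=F(1,1,1)$ from \cref{prop:jac-collision} and append $n-3$ zeros to each source point. The three points $(0,1,0,0,\dots,0)$, $(1,1,0,0,\dots,0)$ and $(1,1,1,0,\dots,0)$ remain distinct. They all map to $(0,1,0,0,\dots,0)$, so $F_n$ is not injective and therefore not an automorphism.

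The degree is the step that needs some care. Set $K=k(P,Q,R)$, $L=k(x,y,z)$ and $S=\{x_4,\dots,x_n\}$. Then
\[
k(F_n)=K(S), \qquad k(X)=L(S).
\]
The extension $L/K$ is finite by \cref{prop:degree}, so $S$ is algebraically independent over $L$ because $\trdeg_k k(X)=n$. I would compute the degree through the primitive element. By \cref{lem:reconstruct}, $L=K(t)$, hence $L(S)=K(S)(t)$. The element $t$ is a root of $D(T)$, which has coefficients in $K\subseteq K(S)$. The extension $K(S)/K$ is purely transcendental, so $K$ is algebraically closed in $K(S)$. Hence an irreducible polynomial over $K$ stays irreducible over $K(S)$: its monic factors over $K(S)$ would have coefficients algebraic over $K$, and so would lie in $K$.

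As an alternative for irreducibility, one can rerun the argument of \cref{lem:irreducible} directly over the UFD $k[U,V,W,x_4,\dots,x_n]$. There $D$ is linear in $W$ and has $T^2$-coefficient $1$, so it is irreducible over $K(S)$ by Gauss's lemma. Either way,
\[
[k(X):k(F_n)]=3 .
\]
Since $2\nmid 3$, all hypotheses of \cref{conj:sjc} hold while its conclusion fails, which proves the corollary.
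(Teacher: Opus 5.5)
Your proof is correct and follows the same three steps as the paper: the block-diagonal Jacobian, the collision padded with zeros, and the invariance of the degree $[k(X):k(F_n)]=3$ under adjoining $x_4,\dots,x_n$. The one difference is that you actually justify the last step, either because $K$ is algebraically closed in the purely transcendental extension $K(S)$ or by rerunning the Gauss-lemma argument over $k[U,V,W,x_4,\dots,x_n]$, whereas the paper only asserts that adjoining algebraically independent variables preserves finite degree (minor point: the algebraic independence of $S$ over $L$ needs no appeal to the finiteness of $L/K$, since the $x_i$ are independent indeterminates over $k(x,y,z)$).
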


\begin{proof}
The Jacobian matrix is block diagonal with blocks $\Jac(F)$ and
$I_{n-3}$, so its determinant is $1$.  The collision in
\cref{prop:jac-collision} persists after fixing the remaining coordinates.
Finally, adjoining algebraically independent variables preserves finite field
extension degree, and therefore
\[
[k(x,y,z,x_4,\dots,x_n):k(P,Q,R,x_4,\dots,x_n)]=3.
\]
\end{proof}

\begin{remark}
The construction does not address the two-dimensional version of the
conjecture.
\end{remark}

\section{Origin of the construction}

The map can be recovered systematically from a normalized factorization of a
cubic.  Begin with
\[
  \ell(T)=xT+\beta,
  \qquad
  h(T)=\gamma T^2+\delta T+\varepsilon.
\]
Impose the normalization that the coefficient of $T^2$ in $\ell h$ is $1$:
\begin{equation}\label{eq:normalization}
  x\delta+\beta\gamma=1.
\end{equation}
The resultant is
\[
  \Res_T(\ell,h)=x^2\varepsilon+x\beta\delta+\beta^2\gamma.
\]
In characteristic $2$, equation \eqref{eq:normalization} reduces this to
\[
  \Res_T(\ell,h)=x^2\varepsilon+
  \beta(x\delta+\beta\gamma)=x^2\varepsilon+\beta.
\]
Thus the additional normalization $\Res_T(\ell,h)=1$ is solved by
\[
  \beta=1+x^2\varepsilon.
\]
Taking
\[
  \varepsilon=z,
  \qquad
  \gamma=1+xy,
  \qquad
  \delta=y+xz+x^2yz
\]
satisfies \eqref{eq:normalization}.  The coefficients of $\ell h$ are then
\[
  P,\quad 1,\quad Q+PR,\quad R,
\]
which yields exactly the cubic \eqref{eq:D} and the map \eqref{eq:map}.

\section{Exact symbolic verification}

An accompanying Python script performs exact computations over $\Ftwo$ using
SymPy.  It checks:
\begin{itemize}
  \item $\det\Jac(F)=1$;
  \item the explicit three-point collision;
  \item the factorization \eqref{eq:factorization};
  \item the resultant identity \eqref{eq:resultant}; and
  \item the denominator-cleared reconstruction identities.
\end{itemize}
The irreducibility argument in \cref{lem:irreducible} is independent of the
computer calculation.

\end{document}